\documentclass[12pt,reqno]{amsart}




\headheight=6.15pt \textheight=8.75in \textwidth=6.5in
\oddsidemargin=0in \evensidemargin=0in \topmargin=0in

\usepackage{latexsym}
\usepackage{amssymb}

\newcommand{\sgn}{\mbox{sgn}}

\newcommand{\R}{{\mathbb R}}

\newcommand{\Z}{{\mathbb Z}}

\newcommand{\half}{{\frac{1}{2}}}

\renewcommand{\phi}{\varphi}

\newcommand{\acal}{\mathcal{A}}

\newcommand{\ccal}{\mathcal{C}}

\newcommand{\hcal}{\mathcal{H}}

\newcommand{\ncal}{\mathcal{N}}

\newcommand{\tcal}{\mathcal{T}}

\newtheorem{theo}{{\sc Theorem}}[section]
\newtheorem{maintheo}{{\sc Theorem}}
\newtheorem{mainlem}{{\sc Lemma}}
\newtheorem{mainprop}{{\sc Proposition}}
\newtheorem{cor}[theo]{{\sc Corollary}}

\newtheorem{maincor}[maintheo]{{\sc Corollary}}
\newtheorem{lem}[theo]{{\sc Lemma}}

\title[Lower bounds on the Hausdorff measure of nodal sets  ]
{Lower bounds on the Hausdorff measure of nodal sets  }

\author{Christopher D. Sogge}
\author{Steve Zelditch}
\address{Department of Mathematics, Johns Hopkins University, Baltimore,
MD 21218, USA}
\address{Department of Mathematics, Northwestern University,
Evanston IL,  60208-2730, USA}

\thanks{Research partially supported by NSF grants, DMS-904839 and
  DMS-0904252}

\date{\today}

\begin{document}

\begin{abstract} Let
$\ncal_{\phi_{\lambda}}$ be the nodal hypersurface of a
$\Delta$-eigenfunction $\phi_{\lambda}$ of eigenvalue $\lambda^2$
on a smooth Riemannian manifold. We prove that
$\hcal^{n-1}(\ncal_{\phi_{\lambda}}) \geq C \lambda^{\frac74-\frac{3n}4} $ on
the surface measure of its nodal set. The best previous lower
bound was $e^{- C \lambda}$.
\end{abstract}

\maketitle

Let $(M, g)$ be a compact $C^{\infty}$ Riemannian manifold of
dimesion $n$, let $\phi_{\lambda}$ be an $L^2$-normalized
eigenfunction of the Laplacian,
$$\Delta \phi_{\lambda} = - \lambda^2 \phi_{\lambda},
$$ and let
$$\ncal_{\phi_{\lambda}} = \{x: \phi_{\lambda}(x) = 0\}$$ be its
nodal hypersurface. Let $\hcal^{n-1}(\ncal_{\phi_{\lambda}})$
denote its $(n-1)$-dimensional Riemannian hypersurface measure.
 In this note we prove \begin{maintheo} \label{LB} For any $C^{\infty}$ metric
 $g$, there exists a constant $C_g > 0$ so that
$$\hcal^{n-1}(\ncal_{\phi_{\lambda}}) \geq   C_g
\lambda^{\frac{7}{4} - \frac{3n}{4}}. $$
\end{maintheo}

The proof of Theorem \ref{LB} is based on a special case of a
general identity (Proposition \ref{BOUNDS}) which has other
interesting implications.  In \S \ref{IMPROVE} we consider
possible improvements that might be derived  from  other cases of
the identity in Proposition \ref{BOUNDS}.

Some background on  lower bounds on volumes of nodal
hypersurfaces: In \cite{Y}, S. T. Yau conjectured that for any
$C^{\infty}$ metric, one should have
\begin{equation} \label{DF} c \lambda \leq
\hcal^{n-1}(\ncal_{\phi_{\lambda}}) \leq C \lambda. \end{equation}
Here, and elsewhere in this article, $C, c$ denote some positive
constants depending only on $(M, g)$ and not on $\lambda$.  Both
the upper and lower bounds were proved for real analytic
$C^{\omega}$ metrics by Donnelly-Fefferman in \cite{DF}. However,
for $C^{\infty}$ metrics the best previous result appears to be
\begin{equation}\label{EXP}  C^{- \lambda} \leq
\hcal^{n-1}(\ncal_{\phi_{\lambda}}) \leq \lambda^{C \lambda}.
\end{equation} The upper bound was first proved in \cite{HS} and
the lower bound is proved in Theorem 6.2.5 of \cite{HL}. In
dimension 2, J. Br\"uning proved the lower bound of (\ref{DF})
(see also \cite{Sa}). Thus, our lower bound appears to be the
first one that breaks the exponential barrier in dimensions $n
\geq 3$. Perhaps surprisingly, the proof is quite simple.

In dimension 2,  Donnelly-Fefferman \cite{DF2} and  Dong \cite{D}
proved the upper bound $\hcal^1(\ncal_{\phi_{\lambda}}) \leq C
\lambda^{3/2}$ 
when $\dim M = 2$.  In dimensions $n \geq 3$,  the
Hardt-Simon  $\lambda^{\lambda}$ upper bound in (\ref{EXP}) still
seems to be the only known bound. The approach taken in this note
might lead to improvements in the upper bound, but not  as simply
as for the lower bound.

The proof of Theorem \ref{LB} is based on the following identity,
which was  inspired by a closely related identity  of  R. T. Dong
\cite{D} (see also  \cite{ACF}).

\begin{mainprop} \label{BOUNDS} For any $C^{\infty}$ Riemannian
manifold, we have,
\begin{equation} \label{f=1} 2 \int_{\ncal_{\phi_{\lambda}}} |\nabla
\phi_{\lambda}| dS =  \; \lambda^2 \int_M |\phi_{\lambda}| dV.
\end{equation}
More generally,  for any $f \in C^2(M)$,
\begin{equation} \label{DONGTYPE}  \int_M \left((\Delta + \lambda^2) f \right)  |\phi_{\lambda}| dV =
2 \; \int_{\ncal_{\phi_{\lambda}}}   f |\nabla \phi_{\lambda}| dS.
\end{equation}

\end{mainprop}

To obtain lower bounds on $\hcal^{n-1}(\ncal_{\phi_{\lambda}})$,
we need lower bounds on $||\phi_{\lambda}||_{L^1}$ and upper
bounds on $|\nabla \phi_{\lambda}|$. The following lower bound is
new:

\begin{mainprop} \label{CS}
For any $(M, g)$ and any $L^2$-normalized eigenfunction,
$||\phi_{\lambda}||_{L^1} \geq C_g \lambda^{- \frac{n-1}{4}}$.
\end{mainprop}

Combining Propositions \ref{BOUNDS} and \ref{CS} (and applying the
Schwartz inequality to $||\phi_{\lambda}||_{L^1}$), we obtain

\begin{maincor} \label{BOUNDSCOR} For any $C^{\infty}$ Riemannian
manifold,  there exists  constants $C, c> 0$ so that
$$ C \; \lambda^{2 - \frac{n-1}{4}}
 \leq  C \; \lambda^{2} ||\phi_{\lambda}||_{L^1} =  \int_{\ncal_{\phi_{\lambda}}}   |\nabla
\phi_{\lambda}| dS \leq c\; \lambda^2\; Vol(M)^{1/2}. $$
\end{maincor}
The upper bound is sharp (in terms of order of magnitude)  and is
achieved by the plane wave eigenfunctions on flat tori and by
highest weight spherical harmonics (see \S \ref{EXAMPLES}).

We then further use the local Weyl law  bound $||\nabla
\phi_{\lambda}||_{C^0} \leq C \lambda^{\frac{n + 1}{2}}$, again
valid on any $n$-dimensional Riemannian manifold, to obtain
\begin{mainlem} \label{BOUNDSCORa} For any $C^{\infty}$ Riemannian
manifold,  there exists  constants $C, c> 0$ so that
$$   \int_{\ncal_{\phi_{\lambda}}}   |\nabla
\phi_{\lambda}| dS  \leq \left(\sup_{x \in \ncal_{\phi_{\lambda}}}
|\nabla \phi_{\lambda}(x)|\right)
\hcal^{n-1}(\ncal_{\phi_{\lambda}}) \leq  C \lambda^{\frac{n +
1}{2}} \; \hcal^{n-1}(\ncal_{\phi_{\lambda}}).
$$
\end{mainlem}
Theorem \ref{LB} follows by combining Corollary \ref{BOUNDSCOR}
and  Lemma \ref{BOUNDSCORa} and by dividing both sides by
$\lambda^{\frac{n + 1}{2}}.$

In \S \ref{IMPROVE}, we discuss possible improvements of the lower
bound.  One of the inputs is  a lower bound
$||\phi_{\lambda}||_{L^1} \geq C_g\; \lambda^{- \frac{n-1}{4}}$,
on $L^1$ norms  of  $L^2$-normalized eigenfunctions, which is
valid for all smooth compact Riemannian manifolds.  Although this
is sharp, it is probably only achieved for very special Riemannian
manifolds.

The identity for general $f \in C^2(M)$ of Proposition
\ref{BOUNDS} can  be used to investigate the equidistribution of
nodal sets equipped with the surface measure $\lambda^{-2} |\nabla
\phi_{\lambda}| dS$. Various results of this kind are given in
\cite{Z2}.

 \subsection{Other level sets}

These results generalize easily to any level set
$\ncal_{\phi_{\lambda}}^c : = \{\phi_{\lambda} = c\}$. Let $\sgn
(x) = \frac{x}{|x|}$.

\begin{mainprop} \label{BOUNDSc} For any $C^{\infty}$ Riemannian
manifold, and any $f \in C(M)$ we have,

\begin{equation} \label{DONGTYPEc}  \int_M f (\Delta + \lambda^2)\; |\phi_{\lambda} - c| \;dV
+
\lambda^2 c \int f \mbox{\sgn} (\phi_{\lambda} - c) dV = 2\;
\int_{\ncal^c_{\phi_{\lambda}}}   f |\nabla \phi_{\lambda}| dS.
\end{equation}

\end{mainprop}

This identity has similar implications for
$\hcal^{n-1}(\ncal^c_{\phi_{\lambda}})$ and for the
equidistribution of level sets.  Note that if $c
> \sup |\phi_{\lambda}(x)|$ then indeed both sides are zero.

\begin{maincor}\label{cintro} For
$c \in {\mathbb R}$
$$\lambda^2\int_{\phi_\lambda\ge c}\phi_\lambda dV
=
\int_{\ncal^c_{\phi_{\lambda}}}   |\nabla \phi_{\lambda}| dS
\leq \lambda^2 Vol(M)^{1/2}. $$
Consequently, if $c>0$
$$\hcal^{n-1}(\ncal^c_{\phi_{\lambda}})
+\hcal^{n-1}(\ncal^{-c}_{\phi_{\lambda}})
\geq \;C_g \; \lambda^{2
- \frac{n + 1}{2}} \int_{|\phi_{\lambda}| \geq c} |\phi_{\lambda}|
dV.
$$

\end{maincor}

Of course,  $\int_{|\phi_{\lambda}| \geq c} \phi_{\lambda} dV \leq
||\phi_{\lambda}||_{L^1} $, so the lower bound for the  $c$-level
sets cannot be better than the  lower bound for nodal sets. Since
$\int_{|\phi_{\lambda}| \geq c} |\phi_{\lambda}| dV \geq
||\phi_{\lambda}||_{L^1} - c Vol\{(|\phi_{\lambda}| \geq c\}, $ in
the cases where $||\phi_{\lambda}||_{L^1} \geq \epsilon_0 > 0$,
this lower bound on
$\hcal^{n-1}(\ncal^c_{\phi_{\lambda}}\cup \ncal^{-c}_{\phi_{\lambda}})$
is
comparable to that for nodal sets when $c < \epsilon_0$.

We observe that by the co-area formula,
\begin{equation} \lambda^2 = \int_M |\nabla \phi_{\lambda}|^2 dV =
\int_{\min\{\phi_{\lambda}\}}^{\max \{\phi_{\lambda}\}}
\left(\int_{\ncal^c_{\phi_{\lambda}}} |\nabla \phi_{\lambda}| dS
\right) dc. \end{equation}  The bounds on the nodal set measure
the extent to which $0$ is an ``exceptional value" of
$\phi_{\lambda}$.

Finally, we should explain the connection of our results to those
of R. T. Dong.
 The identity (\ref{DONGTYPE}) is closely related  to the one in
 \cite{D}, Theorem 2.1:  Let $q_{\lambda}(x) = |\nabla
\phi_{\lambda}(x)|^2 + \frac{\lambda^2 \phi_{\lambda}^2}{n}$. Then
for any domain $\Omega \subset M$ with smooth boundary,
$$\hcal^{n-1} (\ncal_{\phi_{\lambda}} \cap \Omega) = \half \int_{\Omega} \frac{(\Delta + \lambda^2) |\phi_{\lambda}|}{
\sqrt{q_{\lambda}}} dV,
$$
where $dV$ is the volume form of $g$. More precisely, let
$\tcal_{\epsilon}$ denote the $\epsilon$-tube around the singular
set $\Sigma (\phi_{\lambda}) = \{x \in \ncal_{\phi_{\lambda}}:
\nabla \phi_{\lambda}(x) = 0\}, $ and define the integral by
$$\lim_{\epsilon \to 0} \half \int_{\Omega \backslash \tcal_{\epsilon}} \frac{(\Delta + \lambda^2) |\phi_{\lambda}|}{
\sqrt{q_{\lambda}}} dV,
$$
Dong's  formula shows that  $ \frac{(\Delta + \lambda^2)
|\phi_{\lambda}|}{ \sqrt{q_{\lambda}}} dV$ is the codimension one
Hausdorff measure  $\hcal^{n-1}$ on $\ncal_{\phi_{\lambda}}$. Dong
used this identity to obtain an upper bound on
$\hcal^1(\ncal_{\phi_{\lambda}})$ on surfaces.  We are  using a
simpler version where one does not divide by $q_{\lambda}$ to
prove a lower bound.

After the second author presented these results at Johns Hopkins,
we learned that similar results were obtained independently by
Colding and Minicozzi \cite{CM}.
 Using different methods they obtained the sharper lower bound of $\lambda^{\frac{3-n}4}$
  for $\hcal^{n-1}(\ncal_{\phi_{\lambda}})$. Just as we made the final revision
  to this article,  D. Mangoubi
 sent  the authors a preprint in which derives the lower bound
$\lambda^{3 - n - \frac{1}{n}}$. He also compares the methods of
the present article and of \cite{CM,M}.

We would like to thank D. Mangoubi, Q. Han and  W. Minicozzi
   for comments on earlier versions of the article. In particular, we thank W. Minicozzi
   for his helpful helpful comments on the exposition.

\section{Proof of Proposition \ref{BOUNDS}}

We first recall (see \cite{H, HHL, Ch, HHON})  that the the
singular set
$$\Sigma(\phi_{\lambda})= \{x \in \ncal_{\phi_{\lambda}}: \nabla
\phi_{\lambda}(x) = 0\} $$ satisfies $\hcal^{n-2}(\Sigma
(\phi_{\lambda})) < \infty$. Thus, outside of a codimension one
subset, $\ncal_{\phi_{\lambda}}$ is a smooth manifold, and the
Riemannian surface measure  $d S = \iota_{\frac{\nabla
\phi_{\lambda}}{|\nabla \phi_{\lambda}|}} dV_g$ on
$\ncal_{\phi_{\lambda}}$ is well-defined.

We note that the delta-function on $\ncal_{\phi_{\lambda}}$ is the
 Leray form $\delta(\phi_{\lambda}) =
\phi_{\lambda}^* \delta_0$,  i.e. the surface measure $\frac{d
Vol}{d \phi_{\lambda}} = \frac{dS}{|\nabla \phi_{\lambda}|}. $ The
measure $$\int f d\mu_{\lambda}: = \int_M f ( \Delta + \lambda^2)
|\phi_{\lambda}| dV$$  can thus be expressed as $|\nabla
\phi_{\lambda}| d S = |\nabla \phi_{\lambda}|^2
\delta(\phi_{\lambda})$.

Let us now give the proof of Proposition~\ref{BOUNDS}.  Clearly
the second identity implies the first one there, and so we need to
verify that we have
$$\int_M f d\mu_{\lambda} =  2 \int_{\ncal_{\phi_{\lambda}}}   f
|\nabla \phi_{\lambda}| dS.$$

%

We give two (slightly different) proofs.

\begin{proof}

 Since $d\mu_{\lambda}: = ( \Delta+ \lambda^2) |\phi_{\lambda}| dV = 0$
away from $\{\phi_{\lambda} = 0\}$ it is clear that this
distribution is supported on $\{\phi_{\lambda} = 0\}$. We let $f
\in C^{2}(M)$ and consider
$$\int_M f (\Delta + \lambda^2) |\phi_{\lambda}| dV =
\int_{|\phi_{\lambda}| \leq \delta} f (\Delta + \lambda^2)
|\phi_{\lambda}|  dV. $$ Almost all $\delta$ are regular values of
$\phi_{\lambda}$ by Sard's theorem and so we can apply Green's
theorem to such values, to obtain
$$ \int_{|\phi_{\lambda}| \leq \delta} f (\Delta + \lambda^2)
|\phi_{\lambda}|  dV - \int_{|\phi_{\lambda}| \leq \delta}
|\phi_{\lambda}|   (\Delta + \lambda^2) f  dV =
\int_{|\phi_{\lambda}| = \delta} (f \partial_{\nu}
|\phi_{\lambda}| - |\phi_{\lambda}|
\partial_{\nu} f) dS. $$
Here, $\nu$ is the outer unit normal and $\partial_{\nu}$ is the
associated directional derivative. For $\delta
> 0$, we have
\begin{equation} \label{NORMAL} \nu = \frac{\nabla \phi_{\lambda}}{|\nabla \phi_{\lambda}|}
\;\; \mbox{on}\;\; \{\phi_{\lambda} = \delta\}, \;\;\; \nu = -
\frac{\nabla \phi_{\lambda}}{|\nabla \phi_{\lambda}|} \;\;
\mbox{on}\;\; \{\phi_{\lambda} = - \delta\}. \end{equation}

 Letting $\delta \to 0$ (through the sequence of regular values)  we get
$$ \int_M f (\Delta + \lambda^2) |\phi_{\lambda}| dV = \lim_{\delta \to 0} \int_{|\phi_{\lambda}| \leq \delta} f (\Delta + \lambda^2)
|\phi_{\lambda}|  dV = \lim_{\delta \to 0}
 \int_{|\phi_{\lambda}|
= \delta}   f
\partial_{\nu} |\phi_{\lambda}|dS. $$
Since $|\phi_{\lambda}| = \pm \phi_{\lambda}$ on $\{\phi_{\lambda}
= \pm \delta\}$ and by (\ref{NORMAL}), we see that
$$ \begin{array}{lll} \int_M f (\Delta + \lambda^2) |\phi_{\lambda}| dV & = & \lim_{\delta \to 0}
 \int_{|\phi_{\lambda}|
= \delta}   f \frac{\nabla |\phi_{\lambda}| }{|\nabla
|\phi_{\lambda}| | } \cdot \nabla|\phi_{\lambda}|dS \\ && \\ & = &
\lim_{\delta \to 0} \sum_{\pm}
 \int_{\phi_{\lambda}
= \pm  \delta}   f |\nabla\phi_{\lambda}| dS \\ && \\& = & 2
\int_{\ncal_{\phi_{\lambda}}}   f |\nabla \phi_{\lambda}| dS.
\end{array}
$$

To justify the limit formula, we use the Gauss-Green formula of
geometric measure theory, \begin{equation} \label{GG}
\int_{\acal(0, \delta)} \mbox{div} F dy = - \int_{\partial^*
\acal(0, \delta)} F(y) \cdot \nu(y) d \hcal^{N-1}(y)
\end{equation} to the ``annulus" $\acal(0, \delta) = \{0 \leq
\phi_{\lambda} \leq \delta\}. $ Here, $\partial^* \acal_{0,
\delta}$ is the ``essential boundary" of $\acal_{0, \delta}$ (the
boundary in the sense of measure theory). In our case, the full
boundary is essential, and $\partial^* \acal(0, \delta) =
\ncal_{\phi_{\lambda}} \cup \ncal^{\delta}_{\phi_{\lambda}}. $
Also, $\nu$ is the unit normal, which in our case is $\nu = -
\frac{\nabla \phi_{\lambda}}{|\nabla \phi_{\lambda}|}. $ To obtain
the desired surface integral, we need to set $F = f \nabla
\phi_{\lambda}$, which is a smooth vector field. We obtain
$$\int_{\ncal_{\phi_{\lambda}}}  f |\nabla \phi_{\lambda}| d S  -
\int_{\ncal_{\phi_{\lambda}}^{\delta}} f |\nabla \phi_{\lambda}| d
S =  \int_{\acal(0, \delta)} (\nabla f \cdot \nabla
\phi_{\lambda}) - \lambda^2 \phi_{\lambda}) d V= O(\delta).
$$

The Gauss-Green formula (\ref{GG}) was proved by De Giorgi and
Federer under the assumption that $\hcal^{n-1}(\partial^* \acal(0,
\delta)) < \infty$, which holds for level sets of eigenfunctions
of $C^{\infty}$ metrics since $\hcal^{n-1}(\Sigma(\phi_{\lambda}))
= 0$ as $\hcal^{n-2}(\Sigma(\phi_{\lambda}))<\infty$.  We refer to Federer \cite{F} (Sect 2.10.6, page 173 and to
Theorem 4.5.11 p. 506.) We also refer to Theorem 1 on p. 209 of
\cite{EG} and \cite{P} for further discussion.

\end{proof}

We now give a second proof:

\begin{proof}

We first note that we can express $M$ as the disjoint union
$$M=\bigcup_{j=1}^{N_+(\lambda)}D^+_j \, \cup \, \bigcup_{k=1}^{N_-(\lambda)} D^-_k\, \cup
{\mathcal N}_\lambda,$$ where the $D^+_j$ and $D^-_k$ are the
positive and negative nodal domains of $\phi_\lambda$, i.e, the
connected components of the sets $\{\phi_\lambda>0\}$ and
$\{\phi_\lambda<0\}$.

Let us assume for the moment that $0$ is a regular value for
$\phi_\lambda$, i.e., $\Sigma=\emptyset$.  Then each $D^+_j$ has
smooth boundary $\partial D^+_j$, and so if $\partial_\nu$ is the
Riemann outward normal derivative on this set, by the Gauss-Green
formula we have
\begin{align*}
\int_{D^+_j}((\Delta+\lambda^2) f) \, |\phi_\lambda|\, dV
&=\int_{D^+_j}((\Delta+\lambda^2) f) \, \phi_\lambda \, dV
\\
&=\int_{D^+_j}f\, (\Delta+\lambda^2)\phi_\lambda dV-
\int_{\partial D^+_j}f \, \partial_\nu \phi_\lambda \, dS
\\
&=\int_{\partial D^+_j} f \, |\nabla \phi_\lambda|\, dS,
\end{align*}
using in the last step that $\phi_\lambda$ has eigenvalue
$\lambda^2$, and that $-\partial_\nu \phi_\lambda = |\nabla
\phi_\lambda|$ since $\phi_\lambda=0$ on $\partial D_j^+$ and
$\phi_\lambda$ decreases as it crosses $\partial D_j^+$ from
$D^+_j$. A similar argument shows that
\begin{equation*}
\int_{D_k^-} ((\Delta+\lambda^2)f )\, |\phi_\lambda|\, dV
=-\int_{D_k^-} ((\Delta+\lambda^2)f) \, \phi_\lambda \, dV =\int
f\,
\partial_\nu \phi_\lambda \, dS =\int_{\partial D^-_k} f\, |\nabla
\phi_\lambda| \, dS,
\end{equation*}
using in the last step that $\phi_\lambda$ increases as it crosses
$\partial D^-_k$ from $D^-_k$.  If we sum these two identities
over $j$ and $k$, we get
\begin{multline*}
\int_M ((\Delta+\lambda^2)f) \, |\phi_\lambda|\, dV
=\sum_j\int_{D^+_j} ((\Delta+\lambda^2)f) \, |\phi_\lambda|\, dV
+\sum_{k} \int_{D^-_k} ((\Delta+\lambda^2)f)\, |\phi_\lambda|\, dV
\\
=\sum_j \int_{\partial D_j^+} f\, |\nabla \phi_\lambda|\, dS +
\sum_k \int_{\partial D_k^-} f\, |\nabla \phi_\lambda|\, dS =
2\int_{{\mathcal N}_\lambda} f\, |\nabla \phi_\lambda|\, dS,
\end{multline*}
using the fact that ${\mathcal N}_\lambda$ is the disjoint union
of the $\partial D^+_j$ and the disjoint union of the $\partial
D^-_k$.

If $0$ is not a regular value of $\phi_\lambda$ we  use  the
Gauss-Green formula for domains with rough boundaries.  The
preceding argument yields
$$\int_{D_j^+} ((\Delta+\lambda^2)f) \, |\nabla\phi_\lambda|\, dV
=\int_{\partial D_j^+ } f|\nabla \phi_\lambda|\, dS,$$ and a
similar identity for the negative nodal domains. Since ${\mathcal
N_\lambda}\backslash \Sigma$ is the disjoint union of each of the
$\partial D_j^+ $ and the $\partial D_k^-$, we conclude that the
same equation holds even when  $0$ is not a regular value of
$\phi_\lambda$.
\end{proof}

The attractive feature of this  formula is that it immediately
gives rather strong results on the measure with respect to
$|\nabla \phi_{\lambda}| dS$ of the nodal set.


\begin{lem} \label{COR} For $f \in C^2(M)$, we have  $2 \int_{\ncal_{\phi_{\lambda}}} f  |\nabla \phi_{\lambda}| dS =  \lambda^2 \int_M f |\phi_{\lambda}| dV
+ O(1), $ \end{lem}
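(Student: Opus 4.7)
The plan is to derive this directly from the more general identity (\ref{DONGTYPE}) of Proposition \ref{BOUNDS}. Applying that identity to the given $f \in C^2(M)$, we obtain
\begin{equation*}
2\int_{\ncal_{\phi_\lambda}} f\,|\nabla \phi_\lambda|\,dS = \int_M \bigl((\Delta + \lambda^2)f\bigr)\,|\phi_\lambda|\,dV = \int_M (\Delta f)\,|\phi_\lambda|\,dV + \lambda^2 \int_M f\,|\phi_\lambda|\,dV.
\end{equation*}
Thus the lemma reduces to showing that $\int_M (\Delta f)\,|\phi_\lambda|\,dV = O(1)$, uniformly in $\lambda$.

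To bound this error term, I would note that since $f \in C^2(M)$ and $M$ is compact, $\Delta f$ is a bounded continuous function, so there is a constant $C_f$ with $|\Delta f| \leq C_f$ everywhere. Hence
\begin{equation*}
\left|\int_M (\Delta f)\,|\phi_\lambda|\,dV\right| \leq C_f \,\|\phi_\lambda\|_{L^1(M)}.
\end{equation*}
By the Cauchy–Schwarz inequality applied to $|\phi_\lambda|\cdot 1$, together with the $L^2$-normalization $\|\phi_\lambda\|_{L^2} = 1$, we have $\|\phi_\lambda\|_{L^1} \leq \vol(M)^{1/2}$. This gives a $\lambda$-independent bound and completes the argument.

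There is no real obstacle here: given Proposition \ref{BOUNDS}, the lemma is essentially a one-line corollary, obtained by isolating the $\lambda^2$-piece of $(\Delta+\lambda^2)f$ and absorbing the lower-order contribution of $\Delta f$ into the $O(1)$ term via the trivial $L^1$–$L^2$ bound. The only conceptual point worth emphasizing in the write-up is that the constant implicit in $O(1)$ depends on $\|\Delta f\|_{C^0}$ and on $\vol(M)$, but not on $\lambda$.
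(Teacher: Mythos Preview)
Your proof is correct and essentially identical to the paper's own argument: both apply (\ref{DONGTYPE}), separate off the $\lambda^2$ term, and bound the remaining $\int_M (\Delta f)\,|\phi_\lambda|\,dV$ via Cauchy--Schwarz using $\|\phi_\lambda\|_{L^2}=1$. Your write-up is simply a bit more explicit about the intermediate $C^0$ bound on $\Delta f$.
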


\begin{proof}  Our main identity \eqref{DONGTYPE}  gives, for any test function $f$,
$$2 \int_{\ncal_{\phi_{\lambda}}} f  |\nabla \phi_{\lambda}| dS = \int_M |\phi_{\lambda}|
(\lambda^2 +  \Delta) f dV =  \lambda^2 \int f |\phi_{\lambda}| dV
+ O(1),
$$
by the Schwartz inequality and the fact that
$||\phi_{\lambda}||_{L^2} = 1$.
\end{proof}

\subsection{Proof of Proposition \ref{CS}}

We now prove   Proposition \ref{CS}:
\begin{proof}

Fix a function $\rho\in {\mathcal S}({\mathbb R})$ having the properties
that $\rho(0)=1$ and $\hat \rho(t)=0$ if $t\notin [\delta/2,\delta]$, where
$\delta>0$ is smaller than the injectivity radius of $(M,g)$.
If we then set
$$T_\lambda f = \rho(\sqrt{-\Delta}-\lambda)f,$$
we have that $T_\lambda \phi_\lambda=\phi_\lambda$.  Also,
by Lemma 5.1.3 in \cite{So2}, $T_\lambda$ is an oscillatory
integral operator of the form
$$T_{\lambda} f(x) = \lambda^{\frac{n-1}{2}} \int_M e^{i \lambda
r(x,y)} a_\lambda(x, y) f(y) dy,$$
with $|\partial^\alpha_{x,y}a_\lambda(x,y)|\le C_\alpha$.
Consequently,
$||T_{\lambda}
\phi_{\lambda}||_{L^\infty} \le C \lambda^{\frac{n-1}{2}}
||\phi_{\lambda}||_{L^1}$, with $C$ independent of $\lambda$,
and so
$$1 = ||\phi_{\lambda}||_{L^2}^2 = \langle T \phi_{\lambda},
\phi_{\lambda} \rangle \leq ||T \phi_{\lambda}||_{L^\infty}
||\phi_{\lambda}||_{L^1} \leq  C \lambda^{\frac{n-1}{2}}
||\phi_{\lambda}||_{L^1}^2. $$

We can give another proof based on eigenfunction
estimates in \cite{So3}, which say that
$$\|\phi_\lambda\|_{L^p}\le C \lambda^{\frac{(n-1)(p-2)}{4p}},
\quad 2<p\le \tfrac{2(n+1)}{n-1}.
$$
If we pick such a $2<p<\tfrac{2(n+1)}{n-1}$, then by H\"older's inequality, we have
$$1=\|\phi_\lambda\|_{L^2}^{1/\theta}\le \|\phi_\lambda\|_{L^1} \, \|\phi_\lambda\|_{L^p}^{\frac1\theta-1}\le  \|\phi_\lambda\|_{L^1}\bigl(\, C\lambda^{\frac{(n-1)(p-2)}{4p}}\, \bigr)^{\frac1\theta-1},
\quad \theta=\tfrac{p}{p-1}(\tfrac12-\tfrac1p)=\tfrac{(p-2)}{2(p-1)},$$
which  implies $\|\phi_\lambda\|_{L^1}\ge c\lambda^{-\frac{n-1}4}$, since
$(1-\tfrac1\theta) \tfrac{(n-1)(p-2)}{4p}=\tfrac{n-1}4$.
\end{proof}

We remark that this lowerbound for $\|\phi_\lambda\|_{L^1}$ is sharp on the standard sphere, since
$L^2$-normalized highest weight spherical harmonics of degree $k$ with eigenvalue
$\lambda^2 = k(k+n-1)$ have $L^1$-norms which are bounded above
and below by $k^{(n-1)/4}$ as $k\to \infty$.  Similarly, the $L^p$-upperbounds that we used in the
second proof of this $L^1$-lowerbound is also sharp because of these functions.

\bigskip

\subsection{Lower bounds on $\hcal^{n-1}(\ncal_{\phi_{\lambda}})$}

We now  complete the proof of Theorem \ref{LB} along the lines
sketched in the introduction:
\medskip

\begin{proof} Corollary \ref{BOUNDSCOR} follows from Propositions \ref{BOUNDS} and  \ref{CS}.  It thus remains to prove Lemma \ref{BOUNDSCORa}.

We require the following standard bounds
\begin{equation} \label{GRADUB}  ||\nabla \phi_{\lambda}||_{C^0} \leq C\; \lambda^{1 +
\frac{n-1}{2}}.  \end{equation} The proof  follows from the local
Weyl law,
$$\sum_{j: \lambda_j \leq \lambda} |\nabla \phi_{\lambda}(x)|^2 =
C_n \lambda^{n + 2} + R(x, \lambda), $$ where $R(x, \lambda) =
O(\lambda^{n + 1}). $ See, e.g., Proposition 2.3 of \cite{Z}. It
follows that $ |\nabla \phi_{\lambda}(x)|^2  \leq |R(x, \lambda)|
= O(\lambda^{n + 1})$. Hence,
$$ \int_{\ncal_{\phi_{\lambda}}}   |\nabla \phi_{\lambda}| dS \leq
\lambda^{\frac{n+1}{2}}  \int_{\ncal_{\phi_{\lambda}}}
 dS. $$

We then divide by $\lambda^{\frac{n+1}{2}} $, use the identity of
Proposition \ref{BOUNDS}  and then use the lower bound of
Proposition \ref{CS} to complete the proof of Theorem \ref{LB}.

\end{proof}

 \subsection{General level sets} The proof of Proposition \ref{BOUNDSc} for general level sets is
 similar to that for $c = 0$, so we will be brief.

\begin{lem} \label{DISTRIBUTIONa} Suppose that $\{\phi_{\lambda} = c\}$ is a level set.
Then (\ref{DONGTYPEc}) holds, i.e. we have $$(\Delta + \lambda^2)
|\phi_{\lambda} - c| dV
+
\lambda^2\; c\; \sgn(\phi_{\lambda} - c)
= |\nabla \phi_{\lambda}|^2 \delta (\phi_{\lambda} - c).$$

\end{lem}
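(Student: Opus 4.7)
The strategy is to adapt the proof of Proposition~\ref{BOUNDS} with $\phi_\lambda$ replaced by $\phi_\lambda - c$, being careful with the extra linear term that appears because $\phi_\lambda - c$ is no longer an eigenfunction of $\Delta$.

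First I would pair the left hand side against a test function $f \in C^2(M)$ and work away from the level set. On the open set $\{\phi_\lambda \neq c\}$ we have $|\phi_\lambda - c| = (\phi_\lambda - c)\,\sgn(\phi_\lambda - c)$, and since $\sgn(\phi_\lambda - c)$ is locally constant there, a pointwise computation using $\Delta \phi_\lambda = -\lambda^2 \phi_\lambda$ gives
\begin{equation*}
(\Delta + \lambda^2)\,|\phi_\lambda - c| \;=\; \sgn(\phi_\lambda - c)\,(\Delta + \lambda^2)(\phi_\lambda - c) \;=\; -\lambda^2 c\,\sgn(\phi_\lambda - c).
\end{equation*}
Consequently the distribution $\mu_\lambda^c := (\Delta + \lambda^2)|\phi_\lambda - c| + \lambda^2 c\,\sgn(\phi_\lambda-c)$ is supported on $\mathcal{N}^c_{\phi_\lambda}=\{\phi_\lambda=c\}$, which reduces the problem to identifying this measure with $|\nabla\phi_\lambda|^2\delta(\phi_\lambda - c)$.

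Next, I would localize the pairing $\int_M f\,d\mu_\lambda^c$ to the tubular set $\{|\phi_\lambda - c|\le\delta\}$ and apply Green's theorem on this region for $\delta$ chosen, via Sard's theorem, to be a regular value of $\phi_\lambda - c$. The interior contribution $\int_{|\phi_\lambda-c|\le\delta}|\phi_\lambda - c|\,(\Delta+\lambda^2)f\,dV$ is $O(\delta)$, while the boundary contribution splits into integrals over $\{\phi_\lambda = c\pm\delta\}$ on each of which $|\phi_\lambda-c|=\pm(\phi_\lambda-c)$ and the outer unit normal equals $\mp\nabla\phi_\lambda/|\nabla\phi_\lambda|$. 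Exactly as in Proposition~\ref{BOUNDS}, both boundary pieces contribute $\int f\,|\nabla\phi_\lambda|\,dS$ on $\{\phi_\lambda=c\pm\delta\}$, so as $\delta\to 0$ through regular values the two sides merge into
\begin{equation*}
\int_M f\,(\Delta+\lambda^2)|\phi_\lambda-c|\,dV + \lambda^2 c\int_M f\,\sgn(\phi_\lambda-c)\,dV \;=\; 2\int_{\mathcal{N}^c_{\phi_\lambda}} f\,|\nabla\phi_\lambda|\,dS,
\end{equation*}
which is precisely the distributional identity claimed, since $|\nabla\phi_\lambda|\,dS = |\nabla\phi_\lambda|^2\,\delta(\phi_\lambda - c)$ via the Leray form. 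Equivalently, one can use the nodal-domain decomposition argument of the second proof of Proposition~\ref{BOUNDS}, partitioning $M$ into the components of $\{\phi_\lambda>c\}$ and $\{\phi_\lambda<c\}$ and applying Gauss--Green on each; the only new input compared with $c=0$ is the constant term $-\lambda^2 c\,\sgn(\phi_\lambda-c)$ produced when one replaces $\Delta\phi_\lambda$ by $-\lambda^2\phi_\lambda = -\lambda^2(\phi_\lambda-c) - \lambda^2 c$.

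The main obstacle, as in the nodal case, is the singular set $\Sigma^c = \{x\in\mathcal{N}^c_{\phi_\lambda}: \nabla\phi_\lambda(x)=0\}$, along which the level set fails to be smooth and Green's theorem does not directly apply. I would handle this exactly as in the first proof of Proposition~\ref{BOUNDS}: appeal to the De Giorgi--Federer form of Gauss--Green for sets of finite perimeter, noting that $\mathcal{H}^{n-2}(\Sigma^c)<\infty$ by the standard results cited there (\cite{H, HHL, Ch, HHON}; the arguments extend to general level sets with the same conclusions), so in particular $\mathcal{H}^{n-1}(\Sigma^c)=0$ and the essential boundary of $\{|\phi_\lambda - c|\le\delta\}$ equals $\mathcal{N}^c_{\phi_\lambda}\cup\mathcal{N}^{c\pm\delta}_{\phi_\lambda}$ up to an $\mathcal{H}^{n-1}$-null set. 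Once this measure-theoretic point is justified, the limit $\delta\to 0$ goes through verbatim.
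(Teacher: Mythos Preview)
Your proposal is correct and follows essentially the same approach as the paper: identify the support of the distribution by computing $(\Delta+\lambda^2)|\phi_\lambda-c|=-\lambda^2 c\,\sgn(\phi_\lambda-c)$ away from $\{\phi_\lambda=c\}$, then localize to $\{|\phi_\lambda-c|\le\delta\}$, apply Green's theorem, and let $\delta\to 0$ through regular values to recover $2\int_{\ncal^c_{\phi_\lambda}} f\,|\nabla\phi_\lambda|\,dS$. Your treatment of the singular set via the De Giorgi--Federer Gauss--Green theorem and your mention of the alternative nodal-domain decomposition mirror the two proofs of Proposition~\ref{BOUNDS}; if anything, you are slightly more explicit than the paper about why $\hcal^{n-1}(\Sigma^c)=0$ for general level sets.
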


\begin{proof} When $\phi_{\lambda}(x) \not= c,$ we have
$$ (\Delta + \lambda^2) |\phi_{\lambda} - c| dV =
-
\sgn(\phi_{\lambda} - c) \lambda^2 c dV. $$ Hence the difference
of the two sides is supported on $\ncal^c_{\phi_{\lambda}}$. We
then repeat the calculation in Proposition \ref{BOUNDS} with the
sets $|\phi_{\lambda} - c | \leq \delta$, to get
$$\int_M f (\Delta + \lambda^2) |\phi_{\lambda} - c| dV =
-
\int_{|\phi_{\lambda} - c| \geq \delta} f
\left(\sgn(\phi_{\lambda} - c) \lambda^2 c\right) dV +
\int_{|\phi_{\lambda}- c| \leq \delta} f (\Delta + \lambda^2)
|\phi_{\lambda} - c|  dV. $$ By Green's theorem,
$$ \int_{|\phi_{\lambda} - c| \leq \delta} f (\Delta + \lambda^2)
|\phi_{\lambda} - c|  dV - \int_{|\phi_{\lambda} - c| \leq \delta}
|\phi_{\lambda} - c|  (\Delta + \lambda^2) f  dV =
\int_{|\phi_{\lambda} - c| = \delta} (f \partial_{\nu}
|\phi_{\lambda} - c| - |\phi_{\lambda} - c|
\partial_{\nu} f) dS. $$
 Letting $\delta \to 0$ we get
$$ \ \lim_{\delta \to 0} \int_{|\phi_{\lambda} - c| \leq \delta} f (\Delta + \lambda^2)
|\phi_{\lambda}- c|  dV = \lim_{\delta \to 0}
 \int_{|\phi_{\lambda} - c|
= \delta}   f
\partial_{\nu} |\phi_{\lambda} - c|dS. $$
We have
$$\partial_{\nu} = \frac{\nabla |\phi_{\lambda} - c| }{|\nabla
|\phi_{\lambda} - c| |} \cdot \nabla = \frac{\nabla
\phi_{\lambda}}{|\nabla \phi_{\lambda}|} \cdot \nabla,
\;\;(\mbox{on} \; \{|\phi_{\lambda} - c| = \delta\})$$ and as
before,
$$ \lim_{\delta \to 0}
 \int_{|\phi_{\lambda} - c|
= \delta}   f
\partial_{\nu} |\phi_{\lambda} - c|dS  =
2
 \int_{\ncal^c_{\phi_{\lambda}}}   f |\nabla \phi_{\lambda}| dS. $$

\end{proof}

 We now prove Corollary \ref{cintro}:

 \begin{proof} The first statement follows by integrating $\Delta$ by parts,
 and by using the identity,
 \begin{equation} \label{c} \begin{array}{lll} \int_M |\phi_{\lambda} - c| + c \;\sgn(\phi_{\lambda} - c)
 \;dV & = & \int_{\phi_{\lambda} > c} \phi_{\lambda} dV -
 \int_{\phi_{\lambda} < c} \phi_{\lambda} dV \\ && \\ &=& 2\int_{\phi_{\lambda} > c} \phi_{\lambda} dV ,
 \end{array} \end{equation}
 since $0=\int_M \phi_\lambda dV=\int_{\phi_\lambda>c}\phi_\lambda dV
 +\int_{\phi_\lambda<c}\phi_\lambda dV$.


Since for $c>0$ we have
 $\int_{\phi_\lambda>-c}\phi_\lambda dV =-\int_{\phi_\lambda< -c}\phi_\lambda
dV
=\int_{\phi_\lambda< -c}|\phi_\lambda|dV,$ we also have
$$\lambda^2\int_{|\phi_\lambda|\ge c} |\phi_\lambda| dV
=
\int_{\ncal^c_{\phi_{\lambda}}}   |\nabla \phi_{\lambda}| dS
+\int_{\ncal^{-c}_{\phi_{\lambda}}}   |\nabla \phi_{\lambda}| dS
,
\quad c>0,
$$
which yields the second part of the Corollary since $|\nabla \phi_\lambda|
=O(\lambda^{\frac{n+1}2})$.
\end{proof}

\subsection{A curious identity}

 If we set $f = \phi_{\lambda_k}$ in (\ref{DONGTYPE}),
we obtain

\begin{lem}\label{DIFFERENCE} $(\lambda_j^2 - \lambda_k^2) \int_M \phi_{\lambda_k} |\phi_{\lambda_j}| dV
= 2\int_{\ncal_{\phi_{\lambda_j}}} \phi_{\lambda_k} |\nabla
\phi_{\lambda_j}| dS. $ \end{lem}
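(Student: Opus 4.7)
The plan is to apply Proposition \ref{BOUNDS} directly with the test function $f = \phi_{\lambda_k}$. Since $(M,g)$ is a smooth compact Riemannian manifold, every Laplace eigenfunction is $C^\infty$, so $\phi_{\lambda_k} \in C^2(M)$ is a valid test function for the identity \eqref{DONGTYPE} applied to the eigenfunction $\phi_{\lambda_j}$.

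The key step is the simple computation
$$(\Delta + \lambda_j^2)\phi_{\lambda_k} = -\lambda_k^2 \phi_{\lambda_k} + \lambda_j^2 \phi_{\lambda_k} = (\lambda_j^2 - \lambda_k^2)\phi_{\lambda_k},$$
using $\Delta \phi_{\lambda_k} = -\lambda_k^2 \phi_{\lambda_k}$. Substituting into
$$\int_M \left((\Delta + \lambda_j^2)\phi_{\lambda_k}\right)|\phi_{\lambda_j}|\,dV = 2\int_{\ncal_{\phi_{\lambda_j}}} \phi_{\lambda_k} |\nabla \phi_{\lambda_j}|\,dS$$
pulls the scalar factor $(\lambda_j^2 - \lambda_k^2)$ outside the left integral and yields the claimed identity immediately.

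There is essentially no obstacle here, since all the analytic work (the distributional computation of $(\Delta+\lambda_j^2)|\phi_{\lambda_j}|$ and the justification via the Gauss--Green formula for rough boundaries) has already been carried out in the proof of Proposition \ref{BOUNDS}. The only thing worth noting is that $\phi_{\lambda_k}$ is in general sign-changing, but the derivation of \eqref{DONGTYPE} does not require $f \ge 0$; it holds for arbitrary $f \in C^2(M)$. Thus the lemma follows with no further work beyond the substitution described above.
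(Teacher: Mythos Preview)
Your proof is correct and matches the paper's own argument exactly: the lemma is obtained simply by setting $f=\phi_{\lambda_k}$ in \eqref{DONGTYPE} and using $\Delta\phi_{\lambda_k}=-\lambda_k^2\phi_{\lambda_k}$. There is nothing to add.
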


\begin{cor}\label{CURIOUS}  Suppose that $\lambda_j$ is a multiple eigenvalue and
that $\lambda_k = \lambda_j$. Then
$$\int_{\ncal_{\phi_{\lambda_j}}} \phi_{\lambda_k} |\nabla
\phi_{\lambda_j}| dS = 0. $$
\end{cor}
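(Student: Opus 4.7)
The plan is to obtain Corollary \ref{CURIOUS} as an immediate consequence of Lemma \ref{DIFFERENCE}. That lemma asserts
$$(\lambda_j^2 - \lambda_k^2) \int_M \phi_{\lambda_k} |\phi_{\lambda_j}| \, dV = 2 \int_{\ncal_{\phi_{\lambda_j}}} \phi_{\lambda_k} |\nabla \phi_{\lambda_j}| \, dS,$$
so under the hypothesis $\lambda_k = \lambda_j$ the scalar prefactor on the left-hand side vanishes identically, forcing the right-hand side to vanish as well. Dividing by $2$ yields the claimed identity, and no further analysis is required.

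To justify the one appeal to Lemma \ref{DIFFERENCE}, I would briefly verify that the hypotheses of Proposition \ref{BOUNDS} are met: Lemma \ref{DIFFERENCE} is obtained by substituting $f = \phi_{\lambda_k}$ into (\ref{DONGTYPE}), and since $\phi_{\lambda_k}$ is smooth on $M$ by elliptic regularity it is certainly an admissible $C^2$ test function. The eigenvalue equation $(\Delta + \lambda_j^2)\phi_{\lambda_k} = (\lambda_j^2 - \lambda_k^2)\phi_{\lambda_k}$ then produces precisely the prefactor on the left of Lemma \ref{DIFFERENCE}, and setting it to zero in the multiplicity case gives the corollary.

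There is really no obstacle here; the content is algebraic once Lemma \ref{DIFFERENCE} is in hand. The only point worth underscoring conceptually is that the result is genuinely informative only in the multiple-eigenvalue case: for a generic simple eigenvalue the prefactor is nonzero and one merely recovers the weighted integral formula, whereas in the degenerate case the identity forces the $|\nabla \phi_{\lambda_j}|\,dS$-weighted mean of any eigenfunction $\phi_{\lambda_k}$ in the same eigenspace to vanish on $\ncal_{\phi_{\lambda_j}}$, irrespective of whether $\phi_{\lambda_k}$ and $\phi_{\lambda_j}$ are linearly independent.
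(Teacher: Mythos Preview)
Your argument is correct and matches the paper's approach: the corollary is stated immediately after Lemma \ref{DIFFERENCE} with no separate proof, precisely because setting $\lambda_k = \lambda_j$ kills the left-hand side. Your additional remarks on why $f = \phi_{\lambda_k}$ is admissible and how the prefactor arises are accurate and simply make explicit what the paper leaves implicit.
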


For instance, on a circle we may consider the double eigenvalue
$-k^2$ with eigenfunctions $\cos k x, \sin k x$, where we may
verify the formula. Note that the nodal sets need not intersect.
More interesting examples include
 arithmetic flat tori and round spheres have
eigenvalues of high multiplicity (see \S \ref{EXAMPLES}).

By a
similar calculation we have, $$(\lambda_j^2 - \lambda_k^2) \int_M
|\phi_{\lambda_k}|\, |\phi_{\lambda_j}| \, dV =
2\int_{\ncal_{\phi_{\lambda_j}}} |\phi_{\lambda_k}| \, |\nabla
\phi_{\lambda_j}| \, dS - 2\int_{\ncal_{\phi_{\lambda_k}}}
|\phi_{\lambda_j}| \, |\nabla \phi_{\lambda_k}| \, dS, $$ hence if
$\lambda_j$ is a multiple eigenvalue and  $\lambda_k = \lambda_j$,
$$\int_{\ncal_{\phi_{\lambda_j}}} |\phi_{\lambda_k}|\, |\nabla
\phi_{\lambda_j}| \, dS = \int_{\ncal_{\phi_{\lambda_k}}}
|\phi_{\lambda_j}|\, |\nabla \phi_{\lambda_k}| \, dS. $$
The calculation follows from the fact, by Proposition~\ref{BOUNDS},
$(\Delta+\lambda_j^2)|\phi_{\lambda_j}|$ is the measure
$2|\nabla \phi_{\lambda_j}| \, dS$ supported on $\ncal_{\phi_{\lambda_j}}$.

\section{\label{EXAMPLES} Examples}

\noindent{\bf (i) Flat tori}
\medskip

We first consider the eigenfunctions $\phi_{k}(x) = \sin \langle
k, x \rangle$ ($k \in \Z^n$) on the flat torus ${\bf T} =
\R^n/\Z^n$. The zero set consists of the hyperplanes $\langle k, x
\rangle = 0 $ mod $2 \pi$. Also, $|\nabla \sin \langle k, x
\rangle|^2 = \cos^2 \langle k, x \rangle |k|^2$. Since $\cos
\langle k, x \rangle = 1$ when $\sin \langle k, x \rangle  = 0$
the integral is simply $|k|$ times the surface volume of the nodal
set, which is known to be of size $|k|$. So the upper bound of
Corollary \ref{BOUNDSCOR} is achieved in this example. Also, we
have $\int_{{\bf T}} |\sin \langle k, x \rangle| dx \geq C$. Thus,
our method gives  the sharp lower bound
$\hcal^{n-1}(\ncal_{\phi_{\lambda}}) \geq C \lambda^{1}$ in this
example.
\medskip

\noindent{\bf (ii) Zonal spherical harmonics on $S^2$}
\medskip

The spectral decomposition for the Laplacian is the orthogonal sum
of the spaces of spherical harmonics of degree $N$,
\begin{equation} L^2(S^2) = \bigoplus_{N=0}^{\infty} V_N,\;\;\;
\Delta |_{V_N} = \lambda_N Id.
\end{equation}
The eigenvalues   are   given by $\lambda_N^{S^2} = N (N + 1)$ and
the multiplicities are given by $m_N = 2 N + 1 $. A standard basis
is given by the (complex valued) spherical harmonics $Y^N_m$ which
transform by $e^{i m \theta}$ under rotations preserving the
poles.

We first consider  zonal spherical harmonics $Y^N_0 $ on $S^2$,
which are real-valued and maximize sup norms among $L^2$
normalized spherical harmonics. It is well known that $Y^N_0(r) =
\sqrt{\frac{(2 N + 1)}{2 \pi}} P_N(\cos r)$, where $P_N$ is the
$N$th Legendre function and the normalizing constant is chosen so
that $||Y^N_0||_{L^2(S^2)} = 1$, i.e.  $4 \pi \int_0^{\pi/2}
|P_N(\cos r)|^2 dv(r) = 1, $ where $dv(r) = \sin r dr$ is the
polar part of the area form. Its $L^1$ norm can be derived from
the asymptotics of Legendre polynomials in Theorem 8.21.2 of
\cite{S},
$$P_N(\cos \theta) = \sqrt{2} (\pi N \sin \theta)^{-\half} \cos
\left( (N + \half) \theta - \frac{\pi}{4} \right) + O(N^{-3/2}) $$
where the remainder is uniform on any interval $\epsilon < \theta
< \pi - \epsilon$. We have
$$||Y^N_0||_{L^1} = 4 \pi  \sqrt{\frac{(2 N +
1)}{2 \pi}} \int_0^{\pi/2} |P_N(\cos r)| dv(r) \sim C_0 > 0,$$
i.e. the $L^1$ norm is asymptotically a positive constant. Hence
$\int_{\ncal_{Y^N_0}}  |\nabla Y^N_0| ds \simeq C_0 N^2 $. In this
example $|\nabla Y_0^N|_{L^{\infty}} = N^{\frac{3}{2}}$ saturates
the sup norm bound. So the estimate of Lemma \ref{BOUNDSCORa}
produces the lower bound $\hcal^{n-1}(\ncal_{\phi_{\lambda}}) \geq
\lambda^{\half}$. The accurate lower bound is $\lambda$, as one
sees from the rotational invariance and by the fact that $P_N$ has
$N$ zeros. The defect in the argument is that the bound  $|\nabla
Y_0^N|_{L^{\infty}} = N^{\frac{3}{2}}$ is only obtained on the
nodal components near the poles, where each component has  length
$\simeq \frac{1}{N}$.
\bigskip

\noindent{\bf Gaussian beams} \medskip

A third example  is that  of real or imaginary parts of  highest
weight spherical harmonics $Y^N_N$  or other Gaussian beams along
a closed geodesic $\gamma$ (such as exist on equators of convex
surfaces of revolution). We refer to \cite{R} for background.
Gaussian beams are Gaussian shaped lumps which are concentrated on
$\lambda^{-\half}$ tubes $\tcal_{\lambda^{- \half}}(\gamma)$
around closed geodesics and have height $\lambda^{\frac{n-1}{4}}$.
We note that their $L^1$ norms decrease like
$\lambda^{-\frac{(n-1)}{4}}$, i.e. they saturate Proposition
\ref{CS}.  In such cases we have $\int_{\ncal_{\phi_{\lambda}}}
|\nabla \phi_{\lambda}| dS \simeq \lambda^2
||\phi_{\lambda}||_{L^1} \simeq \lambda^{2 - \frac{n-1}{4}}. $ It
is likely that Gaussian beams are minimizers of the $L^1$ norm
among $L^2$-normalized eigenfunctions of Riemannian manifolds.
Also, the gradient bound $||\nabla \phi_{\lambda}||_{L^{\infty}} =
O(\lambda^{\frac{n + 1}{2}})$ is far off for Gaussian beams, the
correct upper bound being $\lambda^{1 + \frac{n-1}{4}}$.  If we
use these estimates on $||\phi_{\lambda}||_{L^1}$ and $||\nabla
\phi_{\lambda}||_{L^{\infty}}$,  our method gives
$\hcal^{n-1}(\ncal_{\phi_{\lambda}}) \geq C \lambda^{1 -
\frac{n-1}{2} }$,  while $\lambda$ is the correct lower bound for
Gaussian beams in the case of surfaces of revolution (or any real
analytic case). The defect is again that the gradient estimate is
achieved only very close to the closed geodesic of the Gaussian
beam. Outside of the tube  $\tcal_{\lambda^{- \half}}(\gamma)$ of
radius $\lambda^{- \half}$  around the geodesic, the Gaussian beam
and all of its derivatives decay like $e^{- \lambda d^2}$ where
$d$ is the distance to the geodesic. Hence
$\int_{\ncal_{\phi_{\lambda}}} |\nabla \phi_{\lambda}| dS \simeq
\int_{\ncal_{\phi_{\lambda}} \cap \tcal_{\lambda^{-
\half}}(\gamma)} |\nabla \phi_{\lambda}| dS. $ Applying  the
gradient bound for Gaussian beams  to the latter integral
 gives $\hcal^{n-1}(\ncal_{\phi_{\lambda}} \cap
\tcal_{\lambda^{- \half}}(\gamma)) \geq C \lambda^{1 -
\frac{n-1}{2}}$, which is sharp since the intersection
$\ncal_{\phi_{\lambda}} \cap \tcal_{\lambda^{- \half}}(\gamma)$
cuts across $\gamma$ in $\simeq \lambda$ equally spaced points (as
one sees from the Gaussian beam approximation).

\section{\label{IMPROVE} Potential improvements}

As mentioned above, Theorem \ref{LB} is based on the identity in
Proposition \ref{BOUNDS} together with a lower bound on
$||\phi_{\lambda}||_{L^1}$ and an upper bound on $|\nabla
\phi_{\lambda}|$. Potential improvements could come from modifying
any of these three inputs.  The weakest link is the sup norm
estimate on $|\nabla \phi_{\lambda}|$. As we have seen in
examples, it is rarely achieved anywhere on $M$, and even when it
is, it is only achieved on a small portion of
$\ncal_{\phi_{\lambda}}$.

The first potential improvement is  to  use test functions $f$
other than $f \equiv 1$ to generate further identities.  For
instance if $f = |\nabla \phi_{\lambda}|^2$, then we get the
identity
$$\int_M |\nabla \phi_{\lambda}|^2 (\Delta + \lambda^2)
|\phi_{\lambda}| dV = \int_{\ncal_{\phi_{\lambda}}} |\nabla
\phi_{\lambda}|^3 dS. $$ Bochner's identity can be used to
simplify the left side. In this way one may try to use $L^p$
estimates rather than   sup norm estimates of $|\nabla
\phi_{\lambda}|$.

The lower bound of Proposition \ref{CS} is sharp among the class
of all $(M, g)$ since it is achieved by highest weight spherical
harmonics and other Gaussian beams. However, most $(M, g)$ do not
have Gaussian beams (which require existence of stable elliptic
closed geodesics), and one might hope to improve Proposition
\ref{CS} on manifolds with special geometries.
   It would  be particularly interesting to determine the $(M, g)$ or the  eigenfunction
sequences for which  $||\phi_{\lambda}||_{L^1} \geq C > 0 $. For
such eigenfunctions, we would have
\begin{equation} \label{SH}  (i) \;\;C \; \lambda^{2}
 \leq   \int_{\ncal_{\phi_{\lambda}}}   |\nabla
\phi_{\lambda}| dS \leq c\; \lambda^2\; Vol(M)^{1/2}, \;\;\; (ii)
\;\; \hcal^{n-1}(\ncal_{\phi_{\lambda}}) \geq C \lambda^{2 -
\frac{n + 1}{2}}.
\end{equation} Thus, in such  cases,  the upper and lower bounds of (i)  have the order of magnitude.
 As we have seen,  the usual exponential eigenfunctions on flat
tori or for zonal spherical harmonics on the standard sphere
satisfy these lower bounds.  It seems interesting to ask whether
$||\phi_{\lambda}||_{L^1} \geq C$ for eigenfunctions on negatively
curved manifolds (for instance). Indeed, in that case almost the
whole sequence of eigenfunctions satisfies $|\phi_{\lambda_j}|^2
\to 1$ (weak *) and $||\phi_{\lambda_j}||_{L^1} \to 0$ would
indicate (roughly speaking) that the eigenfunctions have a more
and more equidistributed set of high and narrow peaks separated by
low troughs. This need not contradict quantum ergodicity but it
may be a rare phenomenon.  We refer to \cite{Z2} for further
discussion.

  We note that minor improvements are also possible using the  argument of
  \cite{SoZ}, which
shows that the bound above on $||\nabla
\phi_{\lambda}||_{L^{\infty}}$ is rarely obtained anywhere for
metrics on $M$, and could be improved by a factor of
$\frac{1}{\log \lambda}$ if $(M, g)$ is negatively curved.

\subsection{Estimates on small balls}

Another potential source of improvements is to decompose $M$ into
small balls or cubes and to used scaled identities on each ball or
cube. More precisely, we could use test functions  $f =
\chi(\lambda (x-x_0))$ where $\chi \in C_0^{\infty}(\R^n)$ is a
smooth cutoff function, equal to one near $0$. We let
$D^{x_0}_{\lambda}$ denote the local dilation operator
$D^{x_0}_{\lambda} u(x_0 + y): = u(x_0 + \frac{y}{\lambda}) $ with
respect to some local coordinates.  It converts a high frequency
eigenfunction into a low frequency eigenfunction. Then
$\chi(\lambda (x-x_0)) = (D^{x_0}_{\lambda})^{-1} \chi(x - x_0)$,
and  $D_{\lambda}^{-1} \Delta D_{\lambda} \sim \lambda^2
\Delta^{x_0}_0$ (the Euclidean Laplacian with coordinates frozen
at $x_0$ in the local coordinates). Hence $\Delta \chi(\lambda
(x-x_0)) \simeq  \lambda^2 D_{\lambda}^{x_0} (\Delta^{x_0}_0
\chi)$ and from (\ref{DONGTYPE}) we have
 \begin{equation} \label{DONGTYPECUT} \begin{array}{lll}
\int_{\ncal_{\phi_{\lambda}} \cap B(x_0, \frac{C}{\lambda})}
\chi(\lambda (x-x_0)) |\nabla \phi_{\lambda}| dS & \simeq &
\lambda^2 \int_{B(x_0, \frac{C}{\lambda})} \left(
D_{\lambda}^{x_0} (I + \Delta^{x_0}_0 ) \chi
\right)|\phi_{\lambda}| dV.
\end{array}
\end{equation}

 One can produce similar inequalities on larger scaled balls. We
 can then cover the manifold with such  balls and partition  the  balls
 into the class of small balls  where $ |\nabla \phi_{\lambda}| $ is of average size
 $\lambda$ and those where it is of the much larger size $\lambda^{\frac{n+1}{2}}$ in our sup
 norm estimate. The latter balls are quite rare.   However, we leave this for future
 investigation.


\begin{thebibliography}{HHHH}

\bibitem[ACF]{ACF} H.  Alt, L. Caffarelli, and A.  Friedman,
Variational problems with two phases and their free boundaries.
Trans. Amer. Math. Soc. 282 (1984), no. 2, 431--461.



\bibitem[B]{B} J. Br\"uning, \"Uber Knoten Eigenfunktionen des Laplace-Beltrami Operators, Math. Z. 158 (1978), 15--21.

\bibitem[Ch]{Ch} S. Y. Cheng,
Eigenfunctions and nodal sets. Comment. Math. Helv. 51 (1976), no.
1, 43--55.

\bibitem[CM]{CM} T. Colding and W. P. Minicozzi II, Lower bounds for nodal sets of eigenfunctions,
arXiv:1009.4156.

\bibitem[D]{D} R. T. Dong, Nodal sets of eigenfunctions on Riemann surfaces. J. Differential Geom. 36 (1992), no. 2, 493--506.

\bibitem[DF]{DF} H. Donnelly and C. Fefferman, Nodal sets of eigenfunctions on
Riemannian manifolds, Invent. Math. 93 (1988), 161-183.

\bibitem[DF2]{DF2} H. Donnelly and C. Fefferman, Nodal sets for eigenfunctions of
the Laplacian on surfaces, J. Amer. Math. Soc. 3(2) (1990),
333--353.



\bibitem[EG]{EG} L.C. Evans and R. Gariepy: {Measure theory and fine properties of functions}, CRC Press, Boca Raton, Ann Arbor, and London, 1992


\bibitem[F]{F} H. Federer,
{\it Geometric measure theory}. Die Grundlehren der mathematischen
Wissenschaften, Band 153 Springer-Verlag New York Inc., New York
1969

\bibitem[H]{H} Q. Han, Singular sets of solutions to elliptic equations.
 Indiana Univ. Math. J. 43 (1994), no. 3,
983--1002.



\bibitem[HHL]{HHL} Q. Han, R.  Hardt, an F. H.  Lin,
Geometric measure of singular sets of elliptic equations. Comm.
Pure Appl. Math. 51 (1998), no. 11-12, 1425--1443.

\bibitem[HL]{HL} Q. Han and F. H. Lin, Nodal sets of solutions of
Elliptic Differential Equations, book in preparation (online at
http://www.nd.edu/~qhan/nodal.pdf).

\bibitem[HHON]{HHON} R. Hardt, M.  Hoffmann-Ostenhof, T.  Hoffmann-Ostenhof and N.  Nadirashvili,
Critical sets of solutions to elliptic equations.  J. Differential
Geom. 51 (1999), no. 2, 359--373.

\bibitem[HS]{HS} R. Hardt and L.  Simon,  Nodal sets for solutions of elliptic equations. J. Differential Geom. 30 (1989), no. 2, 505--522.




\bibitem[Ho]{Ho} L. H\"ormander, {\it The Analysis of Linear Partial
Differential Operators}, Grund.\ Math.\ Wiss.\ 256,
Springer-Verlag, New York, 1983.



\bibitem[Lin]{Lin} F. H. Lin,
 Nodal sets of solutions of elliptic and parabolic equations. Comm. Pure Appl. Math. 44 (1991), no. 3, 287--308.

\bibitem[M]{M} D. Mangoubi, A remark on recent lower bounds for
nodal sets, (preprint, 2010).


\bibitem[P]{P} W. F. Pfeffer,
The Gauss-Green theorem. Adv. Math. 87 (1991), no. 1, 93--147.


\bibitem[R]{R} J. Ralston, Gaussian beams and the propagation of singularities. {\it Studies in partial differential equations},
 206--248, MAA Stud. Math., 23, Math. Assoc. America, Washington, DC, 1982.

\bibitem[Sa]{Sa} A. Savo, Lower bounds for the nodal length of eigenfunctions of the Laplacian. Ann. Global Anal. Geom. 19 (2001), no. 2, 133--151.

\bibitem[So]{So} C. D.  Sogge,
Oscillatory integrals and spherical harmonics. Duke Math. J. 53
(1986), no. 1, 43--65.

\bibitem[So2]{So2} C. D. Sogge, {\it Fourier integrals in classical analysis}. Cambridge Tracts in Mathematics, 105. Cambridge University Press, Cambridge,
1993.

\bibitem[So3]{So3} C. D. Sogge, Concerning the $L\sp p$ norm of spectral clusters for second-order elliptic operators on compact manifolds.  J. Funct. Anal.  77  (1988),  no. 1, 123--138.

\bibitem[SoZ]{SoZ} C. D. Sogge and S. Zelditch,
Riemannian manifolds with maximal eigenfunction growth.  Duke
Math. J. 114 (2002), no. 3, 387--437.

\bibitem[S]{S} G. Szeg\"o, {\it Orthogonal Polynomials}, Amer. Math. Soc. Colloq. Publ., vol. 23, Amer. Math. Soc., Providence, RI, 1991.

\bibitem[S2]{S2} G.  Szeg\"o,  Inequalities for the zeros of Legendre polynomials and related functions. Trans. Amer. Math. Soc. 39 (1936), no. 1,
1--17.

\bibitem[Y]{Y}   S.T. Yau, Survey on partial differential equations in differential geometry.
 {\it Seminar on Differential Geometry}, pp. 3--71, Ann. of Math. Stud., 102, Princeton Univ. Press, Princeton, N.J., 1982.

\bibitem[Z]{Z} S. Zelditch, Real and complex zeros of Riemannian random waves,  Spectral Analysis in Geometry
 and Number Theory - Motoko Kotani, Tohoku University, and Hisashi Naito and Tatsuya Tate, Nagoya University, Editors - AMS,
 2009.

 \bibitem[Z2]{Z2} S. Zelditch, Quantum ergodicity and equidistribution of real nodal sets
 (in preparation).

\end{thebibliography}
\end{document}